\documentclass[12pt]{amsart}

\usepackage{amsmath, amscd, amssymb, latexsym, amsthm, xspace, color}%,a4}
\usepackage{ifpdf}
%, hyperref}
\usepackage{tikz}
\usetikzlibrary{matrix,arrows,shapes,decorations.pathmorphing,decorations.markings,decorations.pathreplacing}
\usepackage{graphicx}
\usepackage[all]{xy}
\usepackage{mathtools}
\setlength{\fboxsep}{-\fboxrule}

\newlength{\halfbls}\setlength{\halfbls}{.5\baselineskip}
\ifpdf
\DeclareGraphicsRule{.pdftex}{pdf}{*}{}
\else
\usepackage{epsfig}
\fi

\def\be{\begin{equation}}   \def\ee{\end{equation}}     \def\bes{\begin{equation*}}    \def\ees{\end{equation*}}
\def\ba{\be\begin{aligned}} \def\ea{\end{aligned}\ee}   \def\bas{\bes\begin{aligned}}  \def\eas{\end{aligned}\ees}
\def\={\;=\;}  \def\+{\,+\,} 
%%% shortform of matrices

\def\mat#1#2#3#4{\begin{pmatrix}#1&#2\\#3&#4\\ \end{pmatrix}}

%% mathbb letters

\newcommand{\CC}{\mathbb{C}}

\newcommand{\HH}{\mathbb{H}}

\newcommand{\PP}{\mathbb{P}}
\newcommand{\QQ}{\mathbb{Q}}
\newcommand{\RR}{\mathbb{R}}

\newcommand{\ZZ}{\mathbb{Z}}

%%% cal letters
\newcommand{\cA}{{\mathcal A}}
\newcommand{\cJ}{{\mathcal J}}

\newcommand{\cE}{{\mathcal E}}

\newcommand{\cH}{{\mathcal H}}

\newcommand{\cM}{{\mathcal M}}

\newcommand{\cX}{{\mathcal X}}

%%% frac letters

%%$ groups

%%% various

\newcommand{\tr}{{\rm tr}}

\DeclareMathOperator{\Sp}{Sp}

\newcommand{\ol}{\overline}

\newcommand{\GL}{{\rm GL}}

\newcommand{\Ker}{{\rm Ker}}

\newcommand{\Jac}{{\rm Jac} \,}

\newcommand{\Prym}{{\rm Prym}}

%%%%%%%%%%%%%% Moduli spaces

\newcommand{\moduli}[1][g]{{\cM}_{#1}}
\newcommand{\AVmoduli}[1][g]{{\cA}_{#1}}

\theoremstyle{definition}
\newtheorem{Defi}{Definition}%[section]
\newtheorem{Rem}[Defi]{Remark}

\theoremstyle{plain}
\newtheorem{Prop}[Defi]{Proposition}

\newtheorem{Cor}[Defi]{Corollary}
\newtheorem{Thm}[Defi]{Theorem}

%%%%%%%%%%%%%%%
\title[Shimura curves in genus~4]{Explicit formulas for infinitely many Shimura curves in genus~4}
\author{Samuel Grushevsky}
\address{Mathematics Department, Stony Brook University, Stony Brook, NY 11794-3651, USA.}
\email{sam@math.sunysb.edu}
\thanks{Research of the first author is supported in part by National Science Foundation under the grants DMS-1201369 and DMS-1501265. Research of the second author is supported in part by ERC-StG-257137.}
\author{Martin M\"oller}
\address{Institut f\"ur Mathematik, Goethe-Universit\"at Frankfurt, Robert-Mayer-Str. 6-8
60325 Frankfurt am Main, Germany}
\email{moeller@math.uni-frankfurt.de}
\begin{document}
\begin{abstract}
In this paper we construct infinitely many Shimura curves
contained in the locus of Jacobians of genus four curves. All Jacobians
in these families are $\ZZ/3$ covers of varying elliptic curves that appear
in a geometric construction of Pirola, and include an example of a Shimura-Teichm\"uller
curve that parameteri\-zes Jacobians that are suitable $\ZZ/6$ covers
of $\PP^1$. We compute explicitly the period matrices of the Shimura curves
we construct using the original construction of Shimura for moduli spaces
of abelian varieties with automorphisms.
\end{abstract}

\maketitle

\section*{Introduction}
A well-known question in relating the geometric and arithmetic properties of symmetric domains is the following problem expectation of Oort (posed in \cite{oortconj}, see \cite{moor}): for $g$ sufficiently large, does the moduli space $\cA_g$ of complex principally polarized abelian varieties contain any Shimura subvariety contained generically in the Torelli locus~$\cJ_g$ --- the locus of Jacobians of smooth genus $g$ curves?  A finite number of examples of Shimura curves contained in the Torelli locus have been constructed by de Jong, Moonen, Mumford, Noot, Oort, and others, most of them arising as Galois covers of $\PP^1$ with branch points varying --- see the survey \cite{moor} for the history of the problem, details, and references; see also \cite{fred2} for more examples arising as non-abelian Galois covers.

From the other direction, a lot of work has been devoted to proving that for $g$ sufficiently large the Torelli locus contains no special subvarieties. The latest progress in this direction has been made by Chen, Lu and Zuo, who proved that there do not exist Shimura curves of Mumford type, or of maximal variation, or contained in the locus of hyperelliptic Jacobians, for some explicit low bounds on $g$, see \cite{luzuo,chenluzuo}.

Our modest contribution in this note is an explicit construction --- by writing down families of period matrices --- of infinitely many Shimura curves contained in $\cJ_4$.
\begin{Thm}\label{thm:main}
For a dense set of points $z_1,z_2$ in a 2-dimensional complex ball, the one-parameter family of $4\times 8$ period matrices, given by
$$(Z_1\,|\, Z_2)   = \left(\smallmatrix 3\tau & 0 &\ 3\tau +3 & 0 \\
0 &\ Z_1^{(3)}(z_1,z_2) & 0 &\ Z_2^{(3)}(z_1,z_2) \endsmallmatrix\right)B^{-1}$$
where $\tau\in\HH$ is the parameter for the family, where~$B$ is given by~\eqref{eq:STbasechange}, and $Z_1^{(3)},Z_2^{(3)}$ are given by Proposition \ref{prop:genus3},  defines a Shimura curve contained generically in the locus of Jacobians of smooth curves of genus~4.
\end{Thm}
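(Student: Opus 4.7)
The plan is to recognize the given family of period matrices as an explicit uniformization of a Shimura curve inside a PEL-type Shimura subvariety of $\cA_4$, constructed via Shimura's original formalism for abelian fourfolds with a $\ZZ/3$-action, and then to invoke the Pirola $\ZZ/3$-cover construction to show that a dense set of members of this family are in fact principally polarized Jacobians of smooth genus~$4$ curves.

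\emph{Shimura curve structure.} Before the base change by $B^{-1}$, the matrix $(Z_1\,|\,Z_2)$ has block form whose upper block depends only on $\tau$ and describes an elliptic factor $E_\tau$, while the lower $3\times 6$ block is, by Proposition~\ref{prop:genus3}, the period matrix of a family of abelian threefolds $A(z_1,z_2)$ with $\ZZ[\zeta_3]$-multiplication parameterized by the two-dimensional complex ball in $(z_1,z_2)$. The matrix $B$ of~\eqref{eq:STbasechange} intertwines the product lattice of $E_\tau\times A(z_1,z_2)$ with a lattice giving a principally polarized fourfold carrying a $\ZZ/3$-action of the signature dictated by the Pirola cover construction. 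Such fourfolds form a PEL Shimura subvariety $\mathcal{S}\subset\cA_4$ whose associated Hermitian symmetric domain is $\HH\times\BB_2$, where the $\HH$-factor records $\tau$ and the $\BB_2$-factor records $(z_1,z_2)$. The fibres of the projection $\HH\times\BB_2\to\BB_2$ are Shimura curves in $\mathcal{S}$, and their images in $\cA_4$ remain Shimura subvarieties; thus for each fixed $(z_1,z_2)$ the family in the statement defines a Shimura curve in $\cA_4$.

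\emph{Jacobian locus and density.} By Pirola's geometric construction, for each admissible $(z_1,z_2)$ the threefold $A(z_1,z_2)$ arises as the $\zeta_3$-isotypical part of the first cohomology of a smooth cyclic degree-$3$ cover $C\to E_\tau$ with prescribed branching data. The resulting $\Jac C$ is then isogenous to $E_\tau\times A(z_1,z_2)$, and a direct Hodge-theoretic calculation — using the decomposition of $H^1(C,\CC)$ into $\ZZ/3$-isotypical pieces — identifies the normalized period matrix of $\Jac C$ with respect to a symplectic basis adapted to the principal polarization precisely as the matrix displayed in the statement, with the explicit $B$ encoding the needed change of basis. Since smoothness of the cover is an open condition on the ball in $(z_1,z_2)$, the generic fibre of the Shimura curve lies in $\cJ_4$; varying $(z_1,z_2)$ over a dense subset of the ball then yields infinitely many distinct Shimura curves, establishing the theorem.

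\emph{Main obstacle.} The delicate step is the compatibility of the integral polarized structures under the base change $B^{-1}$: one must verify that the lattice defined by $(Z_1\,|\,Z_2)B^{-1}$, equipped with the standard symplectic form on $\ZZ^8$, really corresponds to $H_1(\Jac C,\ZZ)$ together with its principal polarization, and not merely to an isogenous copy of it. This requires bookkeeping intersection numbers on the $\ZZ/3$-cover and the precise form of the Weil pairing on the isotypical pieces, and is exactly what the explicit formula for $B$ in~\eqref{eq:STbasechange} is designed to accomplish. A secondary, easier issue is verifying that the forgetful map $\HH\times\BB_2\to\cA_4$ has generically injective $\tau$-fibres, so that the constructed curve is truly one-dimensional rather than collapsing to a point.
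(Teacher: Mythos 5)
Your overall strategy is recognizably the same as the paper's (Shimura's PEL construction for the ball family of $(1,1,3)$-polarized threefolds with $\ZZ/3$-action, Pirola's cyclic covers of elliptic curves to land in the Jacobian locus, and the base change $B$ to pass to principally polarized fourfold periods), but there is a genuine error at the step where you claim the Shimura property. The fibres of $\HH\times\BB_2\to\BB_2$ do map to totally geodesic curves in $\cA_4$, and these are Kuga curves for \emph{every} $(z_1,z_2)$; however, their images are Shimura curves (in the paper's sense: a Kuga fibre space containing a CM point) only when the corresponding threefold $A(z_1,z_2)$ is itself a CM point of the ball quotient. If $A(z_1,z_2)$ is not CM, then no fourfold isogenous to $E_\tau\times A(z_1,z_2)$ is CM, so the curve contains no CM point and is not a Shimura subvariety. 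This is exactly why the theorem is stated only for a \emph{dense} set of $(z_1,z_2)$: the dense set is the set of CM parameters, as in the paper's Remark following the Corollary in Section~1. Your proposal instead asserts the Shimura property for each fixed $(z_1,z_2)$ and uses density only to produce ``infinitely many distinct curves,'' which misreads where the hypothesis is needed and makes a claim that is false as stated.

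A secondary gap: you assume that every (admissible) point of the ball corresponds to a Prym of a smooth Pirola cover, and that the period matrix of $\Jac C$ in a symplectic basis is the displayed one with the same $B$ for all parameters. In the paper, $P(\cH)$ is only a two-dimensional subvariety of the relevant irreducible component (so generic, not arbitrary, $(z_1,z_2)$ arise from covers --- another reason the statement is for a dense set), and the fact that a single matrix $B$ and a single isogeny type work uniformly across the family is not automatic: it is established by the explicit computation at the Shimura--Teichm\"uller point (Section~2, matching \eqref{Z3special} with the special values of $z_1,z_2$ in Proposition~\ref{prop:genus3}) together with the observation that the possible isogenies are countable, hence locally constant over the connected base $P(\cH)$; the same argument shows the varying elliptic factor is the full universal family rather than a ramified cover of it. You correctly flag the integrality/polarization compatibility under $B^{-1}$ as the ``main obstacle,'' but you do not supply an argument for it, whereas this verification (and its propagation from the single explicitly computed fibre to the whole family) is the substantive content of the paper's proof.
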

Our examples include the Shimura-Teichm\"uller curve of genus~4
(\cite{moellerST})) obtained for the values
$$z_1 \= \tfrac{-2\zeta^3 + \zeta^2 + \zeta -3}2, \qquad z_2 \= 3^{-1/4}
\, \tfrac{\zeta^3 -2\zeta^2 +1}2.$$
These Shimura curves are not of maximal degeneration or of Mumford type in the sense of \cite{luzuo}.

Unlike our purely analytic construction of infinitely many Shimura curves contained in the locus of hyperelliptic Jacobians of genus three \cite{grmo}, the construction in this note is geometric, by using  $\ZZ/3$ Galois covers of elliptic covers, and the associated Prym map studied by Pirola (\cite{pirola}).

\medskip
We announced our results at several talks starting in February 2014, including at Oberwolfach, Paris Jussieu, and Roma Tre. At the last stage of preparing our manuscript, the preprint of Frediani, Penegini, Porru \cite{frediani} appeared, which studies much more generally under what conditions families of covers of elliptic curves may lead to Shimura curves. They independently discovered our examples, and much more, while we are able to compute the period matrices explicitly using the Shimura description.

%%%%%%%%%%
\subsection*{Notation}
%%%%%%%%%%
We denote by $\moduli$ the moduli space of smooth complex curves,
and by $\AVmoduli$ the moduli space of complex principally polarized abelian
varieties (ppav). We denote by $\Jac:\moduli\to\AVmoduli$ the Torelli map
that sends the curve to its Jacobian. The main question we study here is
the existence of Shimura curves that are contained generically in
$\cJ_g:=\Jac(\moduli)$. A {\em Kuga fiber space} is an inclusion $j$ of
a $\QQ$-algebraic group $G$ into $\Sp(2g,\QQ)$, such that an arithmetic
lattice $\Gamma \subset G_\RR$ maps to $\Sp(2g,\ZZ)$ and such that the
intersection of the maximal compact
subgroup $U(g) \subset \Sp(2g,\RR)$ with $j(G_\RR)$ is a maximal compact
subgroup $K_\RR$ of $G_\RR$. We identify a Kuga fiber space with the
subvariety of the moduli stack
$$ \Gamma \backslash G_\RR/K_\RR \hookrightarrow \Sp(2g,\ZZ) \backslash \Sp(2g,\RR)/\mathop{U}(g) = \AVmoduli.$$
A {\em Shimura subvariety} of $\AVmoduli$ is the image of such an inclusion
that moreover contains a CM point. One-dimensional Kuga fiber spaces
$\Gamma \backslash G_\RR/K_\RR$ are called Kuga curves, one-dimensional
Shimura varieties are called Shimura curves.
\par
We refer to \cite{moor}
for a detailed discussion of the history and importance of the problem,
its relationship with the Andr\'e-Oort and Coleman conjectures, the current
status, and further references.

\medskip
In Section~1 we review Pirola's construction and explain how it gives
examples of Shimura curves. In Section~2 we compute the period matrix of
the Shimura-Teichm\"uller curve in genus~4. explicitly. In Section~3 we use
this to compute the necessary isogeny data to compute the data
of the period matrices for all the Shimura curves that we construct ---
we hope that this section may be of independent use as a working guide
for applying Shimura's construction.

%%%%%%%%%%
\section{Pirola's Hurwitz space of cyclic triple covers of elliptic curves by genus four curves}
%%%%%%%%%%
In \cite{pirola} Pirola studied the space of genus four smooth curves $X$
that are Galois triple covers $p:X\to E$ of elliptic curves $E$, totally
ramified over three points. We denote by $\cH$ the Hurwitz space
of such covers, which thus admits a finite map  $\pi:\cH\to
\moduli[1,3]$ and a generically injective map $\phi:\cH\to\moduli[4]$.
Pirola's interest in $\cH$ is in disproving a conjecture of Xiao on the fixed parts of families of curves.
\par
Given a point $p: X \to E$ in $\cH$ we define
$$\Prym(X/E) :=  \Ker\Big(\Jac(X) \to \Jac(E)=E\Big)$$
to be the (generalized) Prym variety of the covering. The polarization
on $\Prym(X/E)$ is given by the
restriction of the principal polarization from  $\Jac(X)$, and is of
type $(1,1,3)$, so that we get the Prym map $P:\cH\to\AVmoduli[{3,(1,1,3)}]$
to the moduli space of $(1,1,3)$-polarized abelian threefolds. Using deformation
theory, Pirola showed \cite[Section~2]{pirola} that the image $P(\cH)$ is
two-dimensional. Since $\dim\cH=3$, the generic fiber of the map $P$ is one-dimensional. These fibers are the Kuga curves we are looking for.
\par
\begin{Prop}
For any $A\in P(\cH)\subset\AVmoduli[3,(1,1,3)]$ the one-parameter family of four-dimensional
Jacobians $\Jac(P^{-1}(A))$ is isogenous to the product of $A$ (as a trivial family over
a base) and the universal family of elliptic curves with a suitable level structure.
\end{Prop}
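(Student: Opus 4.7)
The plan is to produce the desired isogeny pointwise and then to globalize it in families. Fix a point $(p\colon X\to E)\in P^{-1}(A)$, so that $A=\Prym(X/E)=\Ker(p_*\colon\Jac X\to E)$. The pullback of line bundles gives $p^*\colon E=\Jac E\to\Jac X$ whose kernel lies in $E[3]$, since $p_*\circ p^*$ is multiplication by $\deg p=3$. Combining $p^*$ with the tautological inclusion $A\hookrightarrow\Jac X$ yields a homomorphism
$$\varphi\colon A\times E\longrightarrow\Jac X,\qquad (a,e)\longmapsto a+p^*(e).$$
If $(a,e)\in\Ker\varphi$ then $p^*(e)=-a\in A\cap p^*(E)$, and applying $p_*$ yields $3e=0$; thus $e$ is 3-torsion and $a=-p^*(e)$ is determined by $e$, so $\Ker\varphi$ is finite. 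Since $\dim(A\times E)=4=\dim\Jac X$, the map $\varphi$ is an isogeny.

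Next I would globalize the construction over $B:=P^{-1}(A)$. The Hurwitz datum gives a tautological family of triple covers $p_B\colon\cX\to\cE$ over~$B$, and the relative pullback $p_B^*$ is a morphism of abelian schemes $\cE\to\Jac(\cX/B)$. Together with the constant abelian subscheme $\underline{A}_B\subset\Jac(\cX/B)$ this yields
$$\varphi_B\colon \underline{A}_B\times_B\cE\longrightarrow\Jac(\cX/B),\qquad (a,e)\longmapsto a+p_B^*(e),$$
whose fiber over each $t\in B$ is the isogeny $\varphi$ just constructed; hence $\varphi_B$ is an isogeny of abelian schemes over~$B$.

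The final step is to identify $\cE\to B$ with the universal family of elliptic curves equipped with an appropriate level structure. The classifying map $\pi\colon B\to\moduli[1,1]$ recording the isomorphism class of $E$ cannot be constant: if it were, every fiber of $\Jac(\cX/B)$ would lie in the (countable) Hecke orbit of a fixed ppav $A\times E_0$, contradicting that the image of $B$ in $\cA_4$ under $\Jac\circ\phi$ is positive-dimensional (by generic injectivity of $\phi$ combined with Torelli). Both $B$ and $\moduli[1,1]$ being one-dimensional, $\pi$ is dominant and hence finite; the residual Hurwitz data (three totally ramified branch points $q_1,q_2,q_3\in E$ together with a cube root $L\in\Pic(E)$ of $\cO_E(-q_1-q_2-q_3)$ defining the cover) provides a canonical level structure that realises $B$ as the base of the universal elliptic curve with that structure.

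The main obstacle is this last step. The isogenies $\varphi$ and $\varphi_B$ are essentially formal from the pullback construction and Poincar\'e reducibility, but pinning down the precise finite cover $\pi\colon B\to\moduli[1,1]$, identifying the exact level structure it carries, and verifying that no further rigidifications beyond the natural Hurwitz ones are needed requires careful bookkeeping of the Hurwitz moduli that remain free after the Prym class $A$ is held fixed.
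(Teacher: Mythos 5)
Your pointwise isogeny $\varphi\colon A\times E\to \Jac(X)$, $(a,e)\mapsto a+p^*(e)$, and its relative version over $B=P^{-1}(A)$ are correct but are not the real content here; the paper treats this decomposition as standard and states it in one sentence. The genuine gap is in your last step, and you flag it yourself. The whole point of the proposition is that the elliptic factor over $B$ is the universal family of elliptic curves with a level structure \emph{and not some ramified cover of it} (this is exactly the caveat the paper spells out). Your argument only establishes that the classifying map $B\to\moduli[1,1]$ is non-constant, hence dominant and finite. That is not enough: a ramified covering of a modular curve also maps finitely and dominantly to the $j$-line, and if $B$ were such a cover the resulting one-parameter family in $\AVmoduli[4]$ would not be a Kuga curve, so the conclusion would fail. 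The sentence claiming that ``the residual Hurwitz data provides a canonical level structure that realises $B$ as the base of the universal elliptic curve'' is precisely the statement that needs proof --- one would have to show that, for fixed $A$, the covers $p\colon X\to E$ with $\Prym(X/E)\cong A$ correspond bijectively and without ramification to level-structured elliptic curves --- and no argument for this is given.

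The paper closes this gap by a different route. It exhibits one explicit fiber of $P$, the family $y^6=x(x+1)(x-t)$, which is known from \cite{moellerST} to be a Shimura--Teichm\"uller curve, so for that particular $A$ the elliptic factor is indeed the universal family with a suitable level; its period matrix is then computed explicitly in the next section. It then observes that the isogeny expressing $\Jac(X)$ as a product can take only countably many forms, hence is constant over the irreducible two-dimensional base $P(\cH)$, so the uniformization of the elliptic factor cannot vary with $A$; the conclusion therefore propagates from the one verified example to every $A\in P(\cH)$. If you want to keep a direct argument for each $A$, you must upgrade your finiteness statement to an actual identification of $B$ with a modular curve, e.g.\ by analysing the fibers of the Prym map over a fixed $A$ along the lines of Pirola's deformation-theoretic study; otherwise the specialization-plus-countability argument of the paper is the efficient way to finish.
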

\begin{proof}
For any $(p:X\to E)\in P^{-1}(A)$ the Jacobian $\Jac(X)$ is isogenous to the
product of the fixed abelian threefold $A$ and the elliptic curve $E$, and
the elliptic curve varies as we vary along the family. Thus what remains
to be seen is that the family of elliptic curves that is thus obtained is
indeed the universal family of elliptic curves with some level, and not
some ramified cover of it.
\par
The family of curves $y^6=x(x+1)(x-t)$ studied in detail in the next section
is a member of this family for some~$A$, in fact the period matrix will
be computed below (see~\eqref{Z3special}). This family is a Shimura curve
as discussed in \cite{moellerST} and computed again below, hence for this~$A$
the family of elliptic curves is as claimed.
\par
On the other hand, the isogeny that allows to write $\Jac(X)$ as a product is
constant over all of $P(\cH)$, since the possible isogenies are countable.
This implies that the uniformization of the elliptic curve does not vary
in the family over $P(\cH)$.
\end{proof}
\par
\begin{Cor}
For any $A\in P(\cH)$, the image of the one-parameter family of curves
$\phi(P^{-1}(A))\subset\moduli[4]$ under the Torelli map $\Jac$ is a
Kuga curve in $\AVmoduli[4]$ generically contained in the locus of Jacobians
of smooth curves.
\end{Cor}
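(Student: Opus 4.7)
The plan is to use the Proposition to reduce the statement to the assertion that a Kuga-curve structure is preserved under product with a fixed abelian variety and under a fixed rational isogeny. By the Proposition, the family $\Jac(\phi(P^{-1}(A)))$ is isogenous, via a single isogeny that is constant over the connected base $P^{-1}(A)$, to the product of the constant family with fiber $A$ and the universal family of elliptic curves over a modular curve $\HH/\Gamma$ with $\Gamma\subset\SL_2(\ZZ)$.

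First I would check that this product family is a Kuga curve in $\AVmoduli[{4,(1,1,3,1)}]$. The universal elliptic family over $\HH/\Gamma$ is the archetypal Kuga curve in $\AVmoduli[1]$, coming from $G=\SL_2$, $j=\id$, and $K_\RR=\SO(2)$. Taking the product with the fixed abelian threefold $A$ corresponds to the block embedding
$$j_A\,:\,\SL_2\hookrightarrow \Sp(8,\QQ),\qquad \gamma\longmapsto \mat{I_6}{0}{0}{\gamma},$$
where the symplectic form on $\QQ^8$ is chosen to respect the block decomposition (the $(1,1,3)$-form on the $A$-factor and the standard symplectic form on the elliptic factor). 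The lattice $\Gamma$ maps to $\Sp(8,\ZZ)$ via $j_A$, and maximal compactness of $\SO(2)$ inside $\SL_2(\RR)$ is preserved under the block embedding, so all the Kuga fiber space conditions are inherited from the genus-one case.

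Next I would transport this Kuga curve to $\AVmoduli[4]$ via the fixed isogeny. On period matrices this amounts to right multiplication by a fixed rational matrix (the matrix $B^{-1}$ of \eqref{eq:STbasechange}), which replaces $j_A$ by a conjugate embedding $j':\SL_2\hookrightarrow\Sp(8,\QQ)$ and replaces $\Gamma$ by a commensurable arithmetic subgroup whose image still lies in $\Sp(8,\ZZ)$. Since commensurability of lattices preserves the Kuga fiber space structure, the image is a Kuga curve in $\AVmoduli[4]$. Generic containment in $\cJ_4$ is then immediate, for by construction this Kuga curve is the image under $\Jac$ of the one-dimensional family $\phi(P^{-1}(A))\subset\moduli[4]$, and $\phi$ is generically injective into the moduli space of smooth curves.

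I expect the main difficulty to be the second step, namely verifying that after conjugation by $B^{-1}$ the arithmetic lattice remains commensurable with the standard one in $\Sp(8,\ZZ)$ and that the maximal compact condition is preserved. Both points ultimately rest on the constancy of the isogeny type over the connected base $P^{-1}(A)$ already used in the proof of the Proposition, together with the explicit rational form of $B$ to be computed in Section~3.
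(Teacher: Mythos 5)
Your argument is correct and is essentially the paper's own (primary) route: the paper disposes of this corollary in one line as ``obvious from the preceding proposition,'' and your proposal simply makes explicit the standard facts that taking a product with a fixed abelian threefold and transporting by a fixed rational isogeny (conjugation by $B^{-1}$, passing to a commensurable arithmetic lattice) preserve the Kuga fiber space structure, with generic containment in $\cJ_4$ coming from the family of smooth curves itself. The paper also records an alternative proof you did not take, namely invoking the numerical criterion of M\"oller--Viehweg--Zuo, since the family is globally isogenous to a trivial family times a modular family of elliptic curves and hence reaches the Arakelov bound; but your direct unpacking is faithful to the intended ``obvious'' argument.
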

\begin{proof}
This is obvious from the preceding proposition. Alternatively, one can
invoke the criterion from \cite{movizu} as the universal family of abelian
varieties over
such a curve is globally isogenous to the product of a trivial family and
the varying family of elliptic curves which reaches the Arakelov bound.
\end{proof}
\begin{Rem}
Note that we did not make any attempt to characterize how many times the closure of $C_A$
intersects the reducible locus nor how many times $C_A$ intersects the hyperelliptic locus.
Consequently, we do not claim a priori that any of the curves $C_A$ reaches the Arakelov bound,
just that $C_A$ is the intersection of a Kuga curve in $\AVmoduli[4]$ with $\moduli[4]$.
\end{Rem}
\begin{Rem}
If $A$ is a CM point, then the Kuga curve constructed
above is in fact a Shimura curve --- indeed, just take the elliptic curve to be CM, and then the
abelian fourfold is isogenous to a product of a CM abelian threefold and a CM elliptic curve. Thus within the two-dimensional family of Kuga curves parameterized by $P(\cH)$ we have an everywhere dense collection
of Shimura curves.
\end{Rem}

%%%%%%%%%%
\section{The period matrix for the genus four Shimura-Teichm\"uller curve}
%%%%%%%%%%
We recall that there is precisely one Shimura curve in genus~4 which happens
to be also a Teichm\"uller curve (\cite{moellerST}). It parametrizes the
family of curves given by equations
$$X_t := \lbrace y^6 = x(x+1)(x-t)\rbrace, $$
where $t$ is the parameter of the family.
In this section we compute explicitly the period matrix of
$X_t$ and then compute explicitly the isogeny between $\Jac(X_t)$
and $\Prym(X/E) \times E$.
\par
\begin{Prop}
In the bases of homology and 1-forms defined below, the period matrix of $X_t$
is given by
$$(Z_1 \, |\, Z_2) = \left( \begin{smallmatrix*}[r]
\tau & \tau &0 &\ -\tau - 1 &\, \ 1 &1 &0 &-1 \\
\zeta^2 - 1 & \ \ 1 &-\zeta^2 + 1 &1\,& 1 &-\zeta^2 &\zeta^2 &\ -\zeta^2 + 1\\
\zeta^3 - \zeta & \ -\zeta^3 & \ -2\zeta^3 + 2\zeta^2 + \zeta - 1 &\ \zeta^2 - \zeta \,&1 &\ \zeta^2 - 1 &\zeta^3 - \zeta^2 - 2\zeta + 2 &\zeta^2 \\
-\zeta^3 + \zeta &\zeta^3 &2\zeta^3 + 2\zeta^2 - \zeta - 1 &\zeta^2 + \zeta \,&1 &\zeta^2 - 1 &\ -\zeta^3 - \zeta^2 + 2\zeta + 2 &\zeta^2\\
 \end{smallmatrix*} \right)$$
where $\zeta := \zeta_{12} =  e^{2\pi i/12}$ is a primitive $12$-th root of unity
and where $t = \lambda(\tau)$ for $\lambda: \HH \to \PP^1\setminus\{0,1,\infty\}$.
\end{Prop}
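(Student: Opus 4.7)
\emph{Proof plan.} The proof is a direct computation of periods on the cyclic $\ZZ/6$-cover $X_t\to\PP^1$ using the automorphism $\sigma\colon(x,y)\mapsto(x,\zeta_6 y)$, organized into three steps.

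\emph{Step 1 (Eigenbasis of $H^0(\Omega^1)$).} The local monodromy of $X_t\to\PP^1$ at the four branch points $\{0,-1,t,\infty\}$ is $(1,1,1,3)\bmod 6$. A direct calculation of the orders of vanishing of $x^a\,dx/y^b$ at all preimages of the branch points shows that the $\sigma^*$-eigenspaces of $H^0(X_t,\Omega^1)$ with eigenvalues $\zeta_6^{-k}$ for $k=1,\dots,5$ have dimensions $(0,0,1,1,2)$, with explicit eigenbasis
$$\omega_1=\tfrac{dx}{y^3},\quad \omega_2=\tfrac{dx}{y^4},\quad \omega_3=\tfrac{dx}{y^5},\quad \omega_4=\tfrac{x\,dx}{y^5}.$$
Moreover $\omega_1$ pulls back from the Legendre elliptic curve $E_\tau\colon z^2=x(x+1)(x-t)$ via $z=y^3$, and $\omega_2$ pulls back from the CM elliptic curve $E_{\zeta_3}\colon w^3=x(x+1)(x-t)$ via $w=y^2$. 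Hence, as in \cite{moellerST}, $\Jac(X_t)$ is isogenous to $E_\tau\times E_{\zeta_3}\times B$ with $B$ a rigid $2$-dimensional abelian variety of CM type for $\ZZ[\zeta_6]$; only $E_\tau$ varies with $t$.

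\emph{Step 2 (Cycle basis).} Choose three disjoint paths on $\PP^1\setminus\{0,-1,t,\infty\}$ joining adjacent branch points and lift each to $X_t$; together with their $\sigma^j$-translates they generate $H_1(X_t,\ZZ)$. A $\ZZ$-linear manipulation, guided by the classical combinatorial formula for the intersection pairing on a cyclic cover, extracts the specific $\ZZ$-basis used in the statement.

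\emph{Step 3 (Period integrals).} For each $\omega_i$ and each basis cycle, the period integral decomposes as a finite sum over $\sigma$-translates of lifts of the base paths, each summand carrying a phase $\zeta_6^{cj}$ coming from the $\sigma$-action on $\omega_i$. After the substitution sending two adjacent finite branch points to $\{0,1\}$, every summand is a Euler Beta integral
$$B(p,q)\=\int_0^1 u^{p-1}(1-u)^{q-1}\,du\=\frac{\Gamma(p)\Gamma(q)}{\Gamma(p+q)},\qquad p,q\in\tfrac16\ZZ.$$
For $\omega_1$ these reduce to the classical Legendre periods, yielding the first row $(\tau,\tau,0,-\tau-1\,|\,1,1,0,-1)$ with $t=\lambda(\tau)$. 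For $\omega_2,\omega_3,\omega_4$, repeated use of the reflection formula $\Gamma(s)\Gamma(1-s)=\pi/\sin\pi s$ together with $\Gamma(\tfrac12)=\sqrt\pi$ shows that all relevant ratios of Beta values lie in $\QQ(\zeta_{12})$ --- the extra square root beyond $\QQ(\zeta_6)$ being forced by the double ramification of $X_t$ over $\infty$ --- and these entries are independent of $\tau$, matching the rigidity of the two CM factors $E_{\zeta_3}$ and $B$.

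The \emph{main obstacle} is bookkeeping rather than ideas: one must carefully propagate signs and phase factors through analytic continuation across the branch cuts in Steps~2--3 to arrive at precisely the entries displayed. An alternative that sidesteps the explicit contour calculus is to appeal to Shimura's PEL description (developed in Section~3): since $\Jac(X_t)$ carries a $\ZZ[\zeta_6]$-action of specified signature, its period matrix is determined up to the single parameter $\tau$ on $E_\tau$, so it suffices to check that the displayed matrix realizes the correct PEL point.
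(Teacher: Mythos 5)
Your Steps 1--2 match the paper's proof (eigenform decomposition for the order-$6$ automorphism, dimensions $(0,0,1,1,2)$, the same eigenbasis $\omega_1,\dots,\omega_4$, and a homology basis built from $\sigma$-translates of lifted paths with the intersection matrix used to extract a symplectic basis). The divergence, and the gap, is in Step 3. The cover $y^6=x(x+1)(x-t)$ has \emph{four} branch points $\{-1,0,t,\infty\}$, so after a M\"obius substitution "sending two adjacent finite branch points to $\{0,1\}$'' the integrand still contains a factor $(x-t)^{c}$ with non-integral exponent: the summands are Gauss hypergeometric values in the cross-ratio, not Euler Beta integrals, and your claimed parameters $p,q\in\tfrac16\ZZ$ with the reflection formula do not apply at generic $t$. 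To rescue this route you must first invoke the fact (which you only allude to via "rigidity'') that the rows attached to $\omega_2,\omega_3,\omega_4$ lie in unitary eigenspaces and are therefore independent of $t$, specialize to a symmetric fiber such as $t=1$ where $y^6=x(x^2-1)$, and use the extra symmetry $x\mapsto -x$ (e.g.\ via the substitution $u=x^2$, which produces Beta integrals with exponents in $\tfrac1{12}\ZZ$, not $\tfrac16\ZZ$) to evaluate or compare periods. Without that specialization the central computational claim of Step 3 fails as stated. Your fallback suggestion --- "check that the displayed matrix realizes the correct PEL point'' --- is also not self-contained here, since identifying the PEL point requires exactly the period data being proved (indeed the paper runs the logic the other way, feeding this period matrix into Shimura's construction in Section~3).

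For comparison, the paper avoids evaluating any integrals at all: after reducing everything to the two unknown periods $f_i=\int_F\omega_i$, $g_i=\int_G\omega_i$ via $\int_{\alpha^k F}\omega_i=\int_F\alpha^{-k}\omega_i$, it uses the $t$-independence of the unitary rows to pass to $t=1$, where the extra automorphism $\beta:(x,y)\mapsto(-x,\zeta y)$ with $\beta^2=\alpha$ (and its analogue $\beta_E$ on the intermediate constant elliptic family $y^3=x(x+1)(x-t)$) permutes the cycles, $\beta(G)=-\alpha^{-1}(F)$, giving linear relations $g_i=\zeta^{\pm k_i}f_i$; one then normalizes $f_i=1$ and sets $\tau=g_1$ for the single non-unitary row. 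If you adopt that mechanism (or carry out the $t=1$, $u=x^2$ Beta computation with the corrected exponents), your outline becomes a complete proof; as written, Step 3 contains a genuine gap.
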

\par
Our strategy is similar to that used by Guardia
for the Shimura-Teichm\"uller curve in genus 3 in
\cite[Section~3 and~4]{Guardia}, except that our situation is
somewhat more involved, as  the only automorphism of a generic point of our
family is $\alpha:(x,y) \mapsto (x,\zeta^2 y)$. We thus apply the fact that
most eigenspaces of the action of the automorphism on the cohomology are
unitary local systems, allowing us to determine much of the period matrix
from suitable special values of $t$.
\begin{proof}
We decompose the cohomology $H^1(X_t,\ZZ)$ in a direct sum of
the eigenspaces for the action of~$\alpha$. The eigenvalues are powers of $\zeta^2$, i.e.~sixth
roots of unity, and their dimensions can be computed
%The dimension
%of the $i$-th eigenspace, i.e.\ for the action of $\alpha^i$ and the rank
%of the holomorphic subbundles can be calculated
by the well-known formulas for cyclic covers (see e.g.~\cite{bouwprank}).

\begin{table}
\begin{tabular}{|c|c|c|c|c|c|}
\hline&&&&&\\
i & 1 & 2 & 3 & 4 & 5 \\
[-\halfbls] &&&&&\\
\hline&&&&& \\
${\rm rank}(H^1(X_t,\ZZ))_{\zeta^{2i}}$ & 2 & 1 & 2 & 1 & 2 \\
[-\halfbls] &&&&&\\
\hline&&&&&\\
${\rm dim}(H^0(X,\Omega^1_X))_{\zeta^{2i}}$  & 0 & 0 & 1 & 1 & 2  \\
[-\halfbls] &&&&&\\
\hline
\end{tabular}
\end{table}

%\begin{figure}[h]
%$$ \begin{array}{|c|c|c|c|c|c|}
%\hline
%i & 1 & 2 & 3 & 4 & 5 \\
%\hline &&&&& \\ [-\halfbls]
%{\rm rank}(H^1(X_t,\ZZ))_{\zeta^{2i}} & 2 & 1 & 2 & 1 & 2 \\
%[-\halfbls] &&&&&\\
%\hline &&&&& \\ [-\halfbls]
%{\rm dim}(H^0(X,\Omega^1_X))_{\zeta^{2i}}  & 0 & 0 & 1 & 1 & 2  \\
%[-\halfbls] &&&&&\\
%\hline
%\end{array}
%$$
%\end{figure}
In fact, a basis of eigenforms  on $X$ is given by
$\omega_1 = dx/y^3$, $\omega_2 = dx/y^4$, $\omega_3 = dx/y^5$ , $\omega_2 = xdx/y^5$.
Consequently, all but the $(-1)$-eigenspace are unitary and thus
the corresponding lines in the period matrix do not depend on~$t$.
\par
It suffices to compute the period matrix locally near some
fixed $t_0$. We take $t_0\in\RR_{>0}$, and let $X = X_{t_0}$.  The first step
is to define a suitable basis of $H_1(X,\ZZ)$ and of $H^0(X,\Omega^1_X)$.
With the branch points aligned as $(-1,0,t_0,\infty)$ we perform the
branch cuts and draw two paths as in Figure~\ref{fig:paths}.
\begin{figure}[ht]
\begin{tikzpicture}[scale=1]%,decoration={
\tikzset{arrow data/.style 2 args={%
      decoration={%
         markings,
         mark=at position #1 with \arrow{#2}},
         postaction=decorate}
      }%

\draw (0,0) node(1){} -- (.825,0) node(2){} -- (1.65,0) node(3){} -- (2.475,0) node(4){} -- (3.3,0)
node(5){} -- (4.125,0) node(6){} -- (4.95,0) node(7){};
\fill (1) circle (1.2pt)
      (3) circle (1.2pt)
      (5) circle (1.2pt)
      (7) circle (1.2pt);

%Figure
\draw [arrow data={0.1}{stealth}, arrow data={0.4}{stealth},
       arrow data={0.6}{stealth}, arrow data={0.9}{stealth}] plot [smooth, tension=.7]
       coordinates {(2.475,0) (1.3,1.5) (.825,0) (1.3,-1.5) (2.475,0)
                    (3.65,1.5) (4.125,0) (3.65,-1.5) (2.475,0)};

% Linien, Punkte
\draw (7.25,0) node(8){} -- (8.075,0) node(9){} -- (8.9,0) node(10){} -- (9.725,0) node(11){} -- (10.55,0)
node(12){} -- (12.1,0) node(13){};
\fill (8) circle (1.2pt)
      (10) circle (1.2pt)
      (12) circle (1.2pt)
      (13) circle (1.2pt);

%Figure
\draw [arrow data={0.1}{stealth}, arrow data={0.4}{stealth},
       arrow data={0.6}{stealth}, arrow data={0.9}{stealth}] plot [smooth, tension=.7]
       coordinates {(8.075,0) (6.9,1.5) (6.425,0) (6.9,-1.5) (8.075,0)
                    (9.25,1.5) (9.725,0) (9.25,-1.5) (8.075,0)};

% %%%%%%%%%%%%%%%%%%%%%%%%%%%%%%%%%%%%%%%%%%
\tikzstyle{every node}=[font=\scriptsize]
\node (1) at (0,-0.27) {$-1$};
\node (3) at (1.65,-0.27) {$0$};
\node (5) at (3.3,-0.27) {$t_0$};
\node (7) at (4.95,-0.27) {$\infty$};

\node (8) at (7.25,-0.27) {$-1$};
\node (10) at (8.9,-0.27) {$0$};
\node (12) at (10.55,-0.27) {$t_0$};
\node (13) at (12.1,-0.27) {$\infty$};

\end{tikzpicture}
\caption{Paths on $\PP^1(\CC)$ and ...} \label{fig:paths}
\end{figure}
Next, w define the loops $F$ and $G$ as lifts of this paths to $X_t$
as in Figure~\ref{fig:lifts}
\begin{figure}[ht]
\begin{tikzpicture}[scale=1]
\tikzset{arrow data/.style 2 args={%
      decoration={%
         markings,
         mark=at position #1 with \arrow{#2}},
         postaction=decorate}
      }%

\draw (0,0) node(1){} -- (.825,0) node(2){} -- (1.65,0) node(3){} -- (2.475,0) node(4){} -- (3.3,0)
node(5){} -- (4.125,0) node(6){} -- (4.95,0) node(7){};
\fill (1) circle (1.2pt)
      (2) circle (1.2pt)
      (3) circle (1.2pt)
      (4) circle (1.2pt)
      (5) circle (1.2pt)
      (6) circle (1.2pt)
      (7) circle (1.2pt);

%Figure
\draw [arrow data={0.1}{stealth}, arrow data={0.4}{stealth},
       arrow data={0.6}{stealth}, arrow data={0.9}{stealth}] plot [smooth, tension=.7]
       coordinates {(2.475,0) (1.3,1.5) (.825,0) (1.3,-1.5) (2.475,0)
                    (3.65,1.5) (4.125,0) (3.65,-1.5) (2.475,0)};

% Linien, Punkte
\draw (7.25,0) node(8){} -- (8.075,0) node(9){} -- (8.9,0) node(10){} -- (9.725,0) node(11){} -- (10.55,0)
node(12){} -- (12.1,0) node(13){};
\fill (8) circle (1.2pt)
      (9) circle (1.2pt)
      (10) circle (1.2pt)
      (11) circle (1.2pt)
      (12) circle (1.2pt)
      (13) circle (1.2pt);

%Figure
\draw [arrow data={0.1}{stealth}, arrow data={0.4}{stealth},
       arrow data={0.6}{stealth}, arrow data={0.9}{stealth}] plot [smooth, tension=.7]
       coordinates {(8.075,0) (6.9,1.5) (6.425,0) (6.9,-1.5) (8.075,0)
                    (9.25,1.5) (9.725,0) (9.25,-1.5) (8.075,0)};

% %%%%%%%%%%%%%%%%%%%%%%%%%%%%%%%%%%%%%%%%%%
\tikzstyle{every node}=[font=\scriptsize]
\node (1) at (0,-0.27) {$-1$};
\node (3) at (1.65,-0.27) {$0$};
\node (5) at (3.3,-0.27) {$t_0$};
\node (7) at (4.95,-0.27) {$\infty$};

\node (8) at (7.25,-0.27) {$-1$};
\node (10) at (8.9,-0.27) {$0$};
\node (12) at (10.55,-0.27) {$t_0$};
\node (13) at (12.1,-0.27) {$\infty$};

\node (A) at (1.65,1.5) {$6$};
\node (B) at (.825,-1.5) {$1$};
\node (C) at (4.125,-1.5) {$2$};
\node (D) at (4.125,1.5) {$F$};

\node (E) at (6.4,1.5) {$1$};
\node (F) at (8.6,1.5) {$6$};
\node (G) at (9.725,1.5) {$G$};
\node (H) at (9.725,-1.5) {$2$};
\node (I) at (7.5,-1.5) {$1$};

\end{tikzpicture}
\caption{... their lifts to $X$.} \label{fig:lifts}
\end{figure}
with the convention that the lower left end of the `butterfly'
is always on sheet number one. \footnote{We follow the sheet numbering convention
that clockwise loops around $-1$, $0$ and $t$ are
given by the permutation $\gamma=(123456)$, while the
loop around $\infty$ is given by $\gamma^{-3}= (14)(25)(36)$.
The action of $\alpha$ corresponds to a clockwise turn and thus
decreases the sheet number by one. We also use the convention that
the intersection number is plus one if the intersection is pointing
as the fingers of the right hand.
}
The set of paths $\{ u_{1+k} = \alpha^k(F), u_{7+\ell} = \alpha^\ell(G),\,
k,\ell=0,\ldots,5\}$ has intersection matrix  $M = \langle u_i, u_j \rangle$
given by
$$
{
M= \left( \begin{smallmatrix*}[r]% {rrrrrrrrrrrr} %{cccccccccccc}
0 & \ -1 &0 & 0 & 0 & 1 & \ -1 &  1 & 0  & 0 &0 & 0 \\
1 & 0 &\ -1 & 0 & 0 & 0 &  0 & -1 & 1  & 0 &0 & 0 \\
0 & 1 &0 & \ -1 & 0 & 0 &  0 &  0 & -1 & 1 &0 & 0  \\
0 & 0 &1 & 0 & \ -1 & 0 &  0 &  0 & 0  &-1 &1 & 0   \\
0 & 0 &0 & 1 & 0 & \ -1 &  0 &  0 &  0 & 0  &-1 &1    \\
-1 & 0 &0 &0 & 1 & 0 &  1 &  0 &  0 & 0  &0 &-1    \\
1 & 0 &0 & 0 & 0 & -1 & 0 & \ -1 & 0  & 0 &0 & 1 \\
-1 & 1 &0 & 0 & 0 & 0 &  1 & 0 & \ -1  & 0 &0 & 0 \\
0 & -1 &1 & 0 & 0 & 0 &  0 &  1 & 0 & \ -1 &0 & 0  \\
0 & 0 &-1 & 1 & 0 & 0 &  0 &  0 & 1  &0 &\ -1 & 0   \\
0 & 0 &0 & -1 & 1 & 0 &  0 &  0 &  0 & 1  &0 &\ -1    \\
0 & 0 &0 & 0 & -1 & 1 &  \ -1 &  0 &  0 & 0  &1 &0    \\
\end{smallmatrix*} \right)}\,.
$$
Since the $8\times 8$ minor of the intersection matrix corresponding to $u_1,u_2,u_3,u_4,u_7,u_8,u_9,u_{10}$ is
non-degenerate, it follows that these paths generate $H_1(X,\ZZ)$. We compute that
the homology classes
\begin{equation*}\begin{aligned}
e_1 := u_1,\quad & e_2 := u_3, &\ e_3 := u_1 - u_3 + u_5 + u_6,\quad
& e_4:= u_2 -u_5-u_8\\
e_5 := u_7,\quad & e_6 := u_9, &\ e_7:= u_2+u_3-u_5+u_7, \quad& e_8 := u_1 + u_2 + u_4 + u_6\\
\end{aligned}\end{equation*}
%$$\{e_1,\ldots,e_8\}$
form a symplectic basis for  $H_1(X,\ZZ)$.

To compute the period matrix in this basis, it suffices to determine $f_i = \int_F \omega_i$ and $g_i = \int_G \omega_i$.
since $\alpha$ acts on $\omega_i$ by a power of $\zeta$ and since
\begin{equation} \label{eq:alphaomega}
\int_{\alpha^k(F)} \omega_i = \int_{F} \alpha^{-k}(\omega_i).
\end{equation}
First,  we use the intermediate 3-to-1 covering of $\PP^1$ by the family
of elliptic curves
$\cE_t: y^3 = x(x+1)(x-t)$.
This covering is a constant family of elliptic curves, since there is
no ramification at infinity. We let $q:\cX_t \to \cE_t, (x,y) \mapsto (x,y^2)$
be the quotient map. The holomorphic one-form on $\cE_t$ is $\omega_E = dx/y^2$
and its pullback is $q^* \omega_E = \omega_2$. On $E$ we may suppose $t=1$ and then there is
an additional automorphism $\beta_E: (x,y) \mapsto (-x,\zeta y)$ besides
$\alpha_E: (x,y) \mapsto (x,\zeta^2 y)$. We view $\cE_t$ as a
three-sheeted covering of the projective line and decompose each sheet into two
half-planes, which we number and then label $U$ and $L$ for upper and lower. Then
 $\beta_E$ maps $1U$ to $1L$, maps $1L$ to $3U$, and so on --- consistently with the fact that $\beta_E^2 = \alpha_E$ decreases the
sheet number by one. Now denoting $F_E:=q(F)$ and $G_E:=q(G)$, we have
$\beta_E(G_E) = -\alpha_E^{-1}(F_E)$ and $\beta_E(F_E) = - \alpha_E^2(G_E)$. Hence
$$ \int_{F_E} \omega_E = - \int_{\beta(G_E)} \alpha_E^* \omega_E = - \zeta^{-4} \int_{G_E}
(\beta^{-1})^* \omega_E = \zeta^{-2} \int_{G_E} \omega_E$$
and consequently $g_2 = \int_G q^*\omega_E = \int_{G_E} \omega_E =
\zeta^2 \int_F \omega_2 = \zeta^2 f_2$ gives us the row in the period matrix corresponding to integrals of $\omega_2$

The holomorphic one-form $\omega_1$ is the pullback of the one-form from the
non-isotrivial family
of elliptic curves $y^2=x(x+1)(x-t)$.
We keep~$f_1$ and~$g_1$ as indeterminates and determine the rest
of the first row using~\eqref{eq:alphaomega}.
\par
Finally, $\omega_3$ and $\omega_4$ belong to a unitary local system
and we may calculate their
periods at any special point. We choose $t=1$, where $\cX_1$ has the extra
automorphism $\beta: (x,y) \mapsto (-x,\zeta y)$ with $\beta^2 = \alpha$.
Similarly to the situation for $\cE_t$, we see that $\beta$ maps the
sheets by the pattern $1U\mapsto 1L \mapsto 6U \mapsto 6L \mapsto \cdots$.
Consequently,
$\beta(G) = -\alpha^{-1}(F)$ (and $\beta(F) = - \alpha^2(G)$). We thus get
$$ f_3 = \int_{F} \omega_3 = - \int_{\beta(G)} \alpha^* \omega_3 = - \zeta^{-5} \int_{G}
(\beta^{-1})^* \omega_3 = \zeta^{-5} \int_{G} \omega_3$$
and $f_4 = - \zeta^{-5} g_4$ due to an extra minus sign appearing when pulling
back by $\beta^{-1}$. Another application of the same argument implies that for
our special point $t=1$ we have $f_4 = \zeta^{-3}g_4$.
\par
The previous calculations also imply that we may normalize the
differentials so that $f_i=1$ for $i=1,2,3,4$. We let $\tau = g_1$
and altogether the period matrix with respect to the $\omega$ and
$\{e_1,\ldots,e_8\}$ is as stated in the proposition.
\end{proof}
\par
\medskip
While the automorphism~$\alpha$ of order~$6$ may not deform to any genus four curves
outside the family $X_t$ above (and from our description it would in fact follow
that it does not, but we will not need this result), the automorphism $\varphi = \alpha^2$
of order~$3$ by definition does deform to the Hurwitz space $\cH $. Thus in what follows
the cover $p: X \to E$ given by $(x,y) \mapsto (x,y^3)$ that is obtained as the quotient
by $\varphi$ will be most important to us.
\par
We want to exhibit an isogeny $\Jac(X) \to \Prym(X/E) \times E$.
On the level of one-forms, this is easy, since the first row of
the period matrix $(Z_1\,|\, Z_2)$ corresponds
to $\omega_1$, which is $\varphi$-invariant. Moreover, we identify the period
lattice $\Lambda$ of $\Jac(X) = \CC^4 /\Lambda$ with $\ZZ^8$ and
consider the sublattices $\Lambda_E$ resp.\  $\Lambda_P$ of $\Lambda$ that
are orthogonal to the real and imaginary part of the second through
fourth row (resp.\ first row) of $(Z_1\,|\, Z_2)$. Considering $\Lambda_E$
as a sublattice of $\ZZ^8$ and writing its components as the first and
fifth column of a base change matrix $B$, and proceeding similarly with $\Lambda_P$,
we obtain the matrix
\be \label{eq:STbasechange}
B := \left( \begin{smallmatrix*}[r]
 1 &\ \ 0 &\ -1 &\ -1 & 1 &\ \ 0 &\ \ 1 &-2\\
 1 &0 & 1 & 2 & 1 &0 &0 & 1\\
 0 &1 & 0 & 0&  0& 0& 0&  0\\
-1 &0 & 0 & 1& \ -1& 0& 1& \ -1\\
 0 &0 & 0 & 1&  1& 0& 0&  0\\
 0 &0 & 0 & 1&  1& 0& 1&  0\\
 0 &0 & 0 & 0&  0& 1& 0&  0\\
 1 &0 & 0&  1&  0& 0& 0&  1\\
 \end{smallmatrix*} \right)
\ee
such that
\be \label{eq:Z1Z2B}
(Z_1\,|\, Z_2) B  = \left(\smallmatrix 3\tau & 0 &\ 3\tau +3 & 0 \\
0 &\ Z_1^{(3)} & 0 &\ Z_2^{(3)} \endsmallmatrix\right)
\ee
is indeed the period matrix of the product of $E$ and the period matrix of $\Prym(X/E)$, which has period matrix
$(Z_1^{(3)}\,|\, Z_2^{(3)})$ given by
\begin{equation}\label{Z3special}
\left( \begin{smallmatrix*}[r]
-\zeta^2 + 1 &\ -\zeta^2 + 2 & -3\zeta^2 + 6 & \zeta^2 & 0& -3\zeta^2 + 3 \\
-2\zeta^3 + 2\zeta^2 + \zeta - 1 & 2\zeta^3 + \zeta &\ -3\zeta^3 + 3\zeta^2 &
\phantom{-} \zeta^3 - \zeta^2 - 2\zeta + 2 & \phantom{-}\zeta^3 + 2\zeta^2 - 2\zeta - 1 & \ -3\zeta^3 + 3\zeta \\
\phantom{-}2\zeta^3 + 2\zeta^2 - \zeta - 1 & \phantom{-}2\zeta^3 - \zeta &
\phantom{-} 3\zeta^3 + 3\zeta^2 &\  -\zeta^3 - \zeta^2
+ 2\zeta + 2 &\ -\zeta^3 + 2\zeta^2 + 2\zeta - 1 &\phantom{-}3\zeta^3 - 3\zeta \\
\end{smallmatrix*} \right)\,.
\end{equation}
We check that the polarization $J_3 = B^T J B $, where $J$ is the standard principal polarization matrix with ones
on the diagonal of the upper right block, is indeed of type $(1,1,3)$.
Furthermore, we see that the $(1,1,3)$ polarized abelian threefold with the period matrix  $(Z_1^{(3)}\,|\, Z_2^{(3)})$ indeed admits indeed an order three automorphism, preserving the polarization $J_3$, given by the diagonal matrix ${\rm diag}(\zeta^4,\zeta^8,\zeta^8)$ acting on
the left and the inverse of
\be \label{eq:M3}
M_3 := \left( \begin{smallmatrix*}[r]
0 &0 &0 &-1& 0& 0\\
0 &1 &0 &0& -3& 3\\
0 &0& 1& 0& 1& 0\\
1 &0 &0 &\ -1 &0 &0\\
0 &0 &-3& 0 &\ -2 &0\\
0 &\ -1 &\ -3 &0 &0&\ -2\\
\end{smallmatrix*} \right)
\ee
acting on the right.

%%%%%%%%%%%%%%%%%%%%%%%%%%%%%%%%%%%%%%%%%%%%%%%%%%%5
\section{Families of abelian threefolds with complex multiplication}
\label{sec:A3CM}
%%%%%%%%%%%%%%%%%%%%%%%%%%%%%%%%%%%%%%%%%%%%%%%%%%%5

The moduli spaces of abelian varieties with given endomorphism ring
and polarization, nowadays called PEL-Shimura varieties, have been
constructed by Shimura in \cite{shimura}. This construction is presented in
the textbook \cite[Chapter~9]{bl}, the notation of which we follow, and
to which we refer for further details.
\par
In the previous section we dealt with the Shimura-Teichm\"uller curve~$\cX$
the general Jacobian in which has a $\ZZ/6$ automorphism. Our goal now is to
explicitly construct $(1,1,3)$-polarized abelian threefolds with an order
three automorphism inducing complex multiplication by $\ZZ[\rho]$ with
$\rho:=\eta^4$ a third root of unity. The moduli space of polarized
abelian threefolds with such an endomorphism is two-dimensional.
\par
\begin{Prop}\label{prop:genus3}
There exists an irreducible component of the moduli space of $(1,1,3)$
polarized abelian threefolds with a $\ZZ/3$ automorphism such that
the $3\times 6$ period matrices are $(Z_1^{(3)}(z_1, z_2) \,|\, Z_2^{(3)}(z_1, z_2)$
for a suitable choice of basis, where the matrix $Z_1^{(3)}$ is equal to
\bas
\left(\begin{smallmatrix*}[r]
%\sqrt[4]{\tfrac13}\,
3^{-1/4}(\zeta^2+3\zeta+1)\, z_2 & (\zeta^2+3\zeta+1)(z_1+1)
&\ \ (-\zeta^3+8\zeta+3)(z_1+1) \\
-2\zeta^3 +2\zeta^2 + \zeta -1 &3^{3/4}\,(\zeta^3+\zeta^2-1) \,z_2 -3\zeta^2(z_1-1)
& a_{23}\\
2\zeta^3 +  2\zeta^2 -\zeta -1 &\ \ \ 3^{3/4}\,(\zeta^3+\zeta^2-1) \,z_2 +
(4\zeta^3 + 2\zeta^2 -5\zeta -4)(z_1-1)
& a_{33}\\
\end{smallmatrix*} \right)
\eas
where
$$
\scriptstyle
a_{23}\=3^{3/4} (3\zeta^3 + 3\zeta^2-2\zeta -1) \,z_2 - 3(\zeta^2-3\zeta-1) (z_1 -1)
$$
and
$$
\scriptstyle
a_{33}\=
3^{3/4}\,(3\zeta^3+3\zeta^2-\zeta-4) \,z_2 + (10\zeta^3 + \zeta^2 -17\zeta -11)
\,z_1  + (8\zeta^3 +2\zeta^2 -13\zeta -11);$$
while the matrix $Z_2^{(3)}$ is equal to
\bas
\left(\begin{smallmatrix*}[r]
3^{-1/4}(3\zeta^3 + \zeta^2 - 3\zeta -2) \,z_2&\ \  (3\zeta^3 + \zeta^2 - 3\zeta -2)
(z_1+ 1)  & \ \ b_{13} \\
\zeta^3 -\zeta^2 -2\zeta +2 & \ \  3(z_1+1) + 3^{3/4}(\zeta^3 -\zeta +1)\,z_2
&   b_{23} \\
-\zeta^3 -\zeta^2 -2\zeta +2 & \ \ 3^{3/4}(-\zeta^3 +\zeta +1)\,z_2
+ (\zeta^3 + 2\zeta^2 + 4\zeta + 2 )(z_1+1)
& b_{33} \\
\end{smallmatrix*} \right)
\eas
where
\bas \scriptstyle
b_{13} & \=  \scriptstyle (8\zeta^3 + 3\zeta^2 -7\zeta -3) \,z_1
\+  10\zeta^3 + 6\zeta^2 -8\zeta -6\\
\scriptstyle b_{23} &\=  \scriptstyle (9\zeta^3 - 3)\,z_1
\,-\, 3^{3/4}(12\zeta^3 -3\zeta^2 -9\zeta +9)\,z_2  \+ 9\zeta^3 -3\zeta^2 \\
\scriptstyle b_{33} &\=  \scriptstyle (\zeta^3 + 10\zeta^2 + 10\zeta
+ 7)\,z_1 \,-\, 3^{3/4}(\zeta^3 - \zeta^2 -3\zeta -3)\,z_2 \+ 5\zeta^3 + 5\zeta^2+8\zeta +2
\eas
for $(z_1,z_2)$ in some complex $2$-ball.
Moreover, the $3\times 6$ period matrix obtained for
$$z_1 \= \tfrac{-2\zeta^3 + \zeta^2 + \zeta -3}2, \qquad z_2 \= 3^{-1/4}
\, \tfrac{\zeta^3 -2\zeta^2 +1}2.$$
is precisely equal to the matrix \eqref{Z3special} obtained in the previous
section.
\end{Prop}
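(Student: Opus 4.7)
The plan is to apply Shimura's construction of PEL moduli spaces, following \cite[Chapter~9]{bl}, to the data consisting of the lattice $\ZZ^6$ with the $(1,1,3)$-polarization $J_3 = B^T J B$ (where $B$ is the base change \eqref{eq:STbasechange} and $J$ is the standard principal polarization) together with the order-three symplectic automorphism $M_3$ of \eqref{eq:M3}. The Shimura-Teichm\"uller computation of Section~2 already exhibits one point of this moduli space, namely the matrix \eqref{Z3special}, and the strategy is to use that point as a distinguished reference around which to write down the two-parameter family explicitly.

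First, I would diagonalize the action of $M_3$ on $\CC^6 = H^1\otimes\CC$ into its $\rho$- and $\bar\rho$-eigenspaces $V_\rho, V_{\bar\rho}$ (with $\rho=\zeta^4$), each three-dimensional over $\CC$. Since $M_3$ preserves $J_3$, the alternating form $J_3$ vanishes identically on $V_\rho$ and on $V_{\bar\rho}$ and pairs them non-degenerately with each other. The Hodge decomposition $H^{1,0}\oplus H^{0,1}$ must be $M_3$-equivariant, and reading off the eigenvalues of $M_3$ on holomorphic forms at the Shimura-Teichm\"uller point --- recorded as ${\rm diag}(\zeta^4,\zeta^8,\zeta^8)$ in Section~2 --- shows that $H^{1,0}$ intersects $V_\rho$ in a line and $V_{\bar\rho}$ in a 2-plane. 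The Riemann bilinear relations then cut the period domain down to the Hermitian symmetric space attached to $U(1,2)$, which is a complex 2-ball, matching the expected moduli dimension.

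Next, I would make this parametrization explicit. Choosing a $\ZZ[\rho]$-basis of $V_\rho$ and of $V_{\bar\rho}$ compatible with the integral symplectic basis used in \eqref{Z3special}, the line $H^{1,0}\cap V_\rho$ is rigid up to scalar, while the 2-plane $H^{1,0}\cap V_{\bar\rho}$ varies over the open subset of $\operatorname{Gr}(2,V_{\bar\rho})$ cut out by polarization positivity; introduce affine coordinates $(z_1,z_2)$ on this subset. After that, normalize the eigenbases so that the point
$$(z_1,z_2) \= \Bigl(\tfrac{-2\zeta^3+\zeta^2+\zeta-3}{2},\ 3^{-1/4}\tfrac{\zeta^3-2\zeta^2+1}{2}\Bigr)$$
reproduces the matrix \eqref{Z3special}. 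With this normalization, each entry of $Z_1^{(3)}(z_1,z_2)$ and $Z_2^{(3)}(z_1,z_2)$ is affine-linear in $(z_1,z_2)$ with coefficients in $\ZZ[\zeta]\cdot 3^{\pm 1/4}$, matching the formulas stated in the proposition.

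The main obstacle is not conceptual but rather bookkeeping: one must choose the eigenbases and the normalization of $(z_1,z_2)$ so that the resulting affine family produces exactly the listed coefficients, and then verify the match at the special point by direct substitution. The subtlest feature is the appearance of the factors $3^{\pm 1/4}$, which reflects the non-integrality of the chosen CM type on the polarized lattice; it is fixed unambiguously once one insists that the $z_2$-dependent directions enter with the rescaling that places the Shimura-Teichm\"uller reference point in the interior of the 2-ball.
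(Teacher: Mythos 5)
Your overall strategy coincides with the paper's: run Shimura's PEL construction (\cite[Chapter~9]{bl}) for $\ZZ[\rho]$-multiplication on the $(1,1,3)$-polarized lattice $(\ZZ^6, J_3, M_3)$, obtain a complex $2$-ball of period matrices, and pin the coordinates down by matching the Shimura--Teichm\"uller matrix \eqref{Z3special}. However, your description of the period domain contains a genuine error: the line $H^{1,0}\cap V_\rho$ is \emph{not} rigid. The first Riemann relation forces $H^{1,0}$ to be isotropic for $J_3$, and since $J_3$ pairs $V_\rho$ with $V_{\bar\rho}$ perfectly, the line is exactly the $J_3$-annihilator in $V_\rho$ of the $2$-plane $H^{1,0}\cap V_{\bar\rho}$; it therefore moves together with the $2$-plane (equivalently, the whole moduli can be carried by the line, a positive line for a signature-$(1,2)$ Hermitian form). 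You never impose this isotropy constraint, and with a literally fixed line plus a $2$-plane ranging over an open subset of ${\rm Gr}(2,V_{\bar\rho})$ the generic member of your family would not satisfy the Riemann relations at all. The slip is visible in the statement you are trying to prove: the first row of $Z_1^{(3)}$ has entries proportional to $3^{-1/4}z_2$ and to $z_1+1$, so the $\rho$-eigenline of holomorphic forms genuinely varies with $(z_1,z_2)$.

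Beyond that, the substance of the proposition is the explicit coefficients, and your proposal defers precisely this to ``bookkeeping'' without supplying the data from which those coefficients could be produced. What is actually needed, and what the paper's proof does, is: exhibit a $\ZZ[\rho]$-basis of the lattice (the paper takes $u_1=e_1$, $u_2=e_2+e_5$, $u_3=2e_3+e_5-e_6$ and their $M_3$-images, which must be checked to generate $\ZZ^6$); reconstruct the skew-Hermitian matrix $T$ from $L^TJ_3L$ and $L^TM_3^TJ_3L$ via the trace identity \eqref{eq:polidentity}; choose a matrix $W$ realizing the signature $(2,1)$ as in \eqref{eq:defW}; write down Shimura's explicit family $J_z$, which is what makes every entry affine-linear in $(z_1,z_2)$ with the specific coefficients; and finally solve the matching problem \eqref{eq:solveforspec}, which involves not only the point $(z_1,z_2)$ but also a block-diagonal change of eigenforms $C$ (a $1\times 1$ block and a $2\times 2$ block), whose existence and uniqueness is an explicit computation rather than a normalization one may simply declare. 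Your explanation of the factors $3^{\pm 1/4}$ is also off: they are not forced by the CM type or by placing the reference point inside the ball, but are an artifact of the particular choice of $W$ (and of coordinates on the eigenspaces).
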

Once we have proven this proposition, our main result follows immediately
\begin{proof}[Proof of Theorem \ref{thm:main}]
Indeed, once we have determined the period
matrices $(Z_1^{(3)}(z_1, z_2) \,|\,
Z_2^{(3)}(z_1, z_2))$ of the Pryms appearing in our construction, to obtain the period matrices of the genus four Jacobians that
form Shimura curves, we will substitute them into the form of period matrices
given by \eqref{eq:Z1Z2B} and undo the base change by the matrix $B$ given by \eqref{eq:STbasechange}.
\end{proof}
\par
\medskip
Thus it remains to prove the proposition using Shimura's construction.
However, the construction of universal families by Shimura involves
quite a number of choices. So this section is a guide to the practical
use of Shimura's construction: how to adapt the choices so that a given
period matrix appears as a member of the family.
\par
\begin{proof}[Proof of Proposition \ref{prop:genus3}]
For the proof, we follow Shimura's original construction, adapted and specialized to our case,
as explained in \cite[Chapter 9]{bl}, to which we refer for all justifications of the steps.
In our case the endomorphism ring is the maximal order $\ZZ[\rho]$
in the field $K = \QQ(\rho)$, while the Rosati involution is simply the
complex conjugation, and its fixed is $K_0=\QQ$, of which $K$ is thus a quadratic extension.
Abelian threefolds with this endomorphism ring are given as quotients of the  $2$-ball
$$ B_2 := \{ z = \left(\begin{smallmatrix} z_1 \\ z_2 \end{smallmatrix} \right)
 \in \CC^2: |z_1|^2 + |z_2|^2 < 1 \}.$$
Following Shimura, to construct {\em all} such abelian threefolds (for complex multiplication
by {\em any} order in~$K$) one chooses any pair $(\cM,T)$ consisting of a free rank~$6$ submodule $\cM$
of $K^3$ and a non-degenerate signature $(2,1)$ matrix $T \in \operatorname{Mat}_{3\times 3}(K)$ such that
$\overline{T}^T = -T$, satisfying
$$ {\rm tr}^K_\QQ(a^T T \overline{b}) \in \ZZ \quad \text{for all} \quad a,b \in \cM\,.$$
To construct such $(\cM,T)$, recall that the signature (2,1) condition means that
there exists a matrix $W \in \GL_3(\CC)$ satisfying
\be \label{eq:defW}
 T \= W^T \mat {I_2} 00{-i} \overline{W}\,,
\ee
where we denote by $I_2$ the product of $i$ and the $2\times 2$ identity matrix.
To any $z \in B_2$ we associate the linear map $J_z: \CC^6 \to \CC^3$
given by left multiplication by the matrix
$$ J_z := \mat{(z^t,1)W} 0 0 {(I_2,z)\overline{W}}\,$$
and we let $j: \cM \to \CC^6$ be given by one embedding of $K \to \CC$ in the
first three coordinates and the complex conjugate embedding in the remaining
three coordinates. Then the abelian variety $X_z := \CC^3/J_z(j(\cM))$ with the
polarization
$$ H := \left(\begin{smallmatrix}
|z|^{-1} & 0 \\ 0 & (I_2 - \ol{z} z^T)^{-1} \\
\end{smallmatrix}\right) $$
has the desired endomorphism, since by a direct matrix computation one checks that
$$ a J_z(j(b)) \= J_z(j(ab)) \quad \text{for all} \quad a \in K\, \quad\text{and}\ b\in\cM\, ,$$
and hence the action of $K$ on $\CC^3$ is compatible with the lattice $\Lambda= J_z(j(\cM))$ (see \cite[Chapter~9.3]{bl}).
Moreover, $\operatorname{Im} H$ is indeed an integer-valued bilinear form on $\Lambda$, as one checks by computing
\be \label{eq:polidentity}
(\operatorname{Im} H)\left(J_z(j(a)),J_z(j(b))\right) \ = {\rm tr}^K_\QQ(a^T T \overline{b})
\ee
for all $a,b \in \cM$.

\medskip
Consequently, our goal is to find a pair $(\cM,T)$ such that the period matrix
$(Z_1^{(3)}\,|\, Z_2^{(3)})$ of the Shimura-Teichm\"uller curve, given by \eqref{Z3special}, appears in the family of period matrices $J_z(j(\cM))$
for some $(z_1, z_2) \in B_2$. To find $\cM$, note that
since $(Z_1^{(3)}\,|\, Z_2^{(3)})$ has complex multiplication by a maximal order, we must have $\cM \cong \ZZ[\rho]^3$.
Hence we have to pick three elements in $\ZZ^6$ that together with their
$\rho$-images (with $\rho$ acting by left multiplication by the matrix $M_3$ given
in~\eqref{eq:M3}, computed for the special case) generate $\ZZ^6$. A choice of such elements
is given for example by $u_1 = e_1$, $u_2 = e_2 + e_5$, $u_3 = 2e_3 + e_5 -e_6$, and $u_{3+k} = \rho u_k$
for $k=1,2,3$.

Let now $L$ be the base change matrix from $\ZZ^6$ to the basis  $u_1,\ldots,u_6$. We compute
$$ L^T J_3 L \=  \left(\begin{smallmatrix*}[r]
0 & 0 &0 \\ 0 & 0 & 1 \\ 0 &\ -1 &\ \ 0 \\
\end{smallmatrix*} \right), \quad L^T M_3^T J_3 L \ =  \left(\begin{smallmatrix*}[r]
-1 & 0 & 0 \\ 0& 0& 1 \\ 0 &\ \ 2&\ \ 9 \\
\end{smallmatrix*} \right).
$$
Now to make a choice of $T$, we use \eqref{eq:polidentity} and the fact that $\operatorname{Im} H$ is the intersection form on $\Lambda$. The matrix
$$T \= \left(\begin{smallmatrix*}[r]
\tfrac23 \zeta^4 + \tfrac1 3 & 0 & 0\\ 0 &0& -\zeta^4 \\ 0 &\ \ -\zeta^4-1
&\ \ -6\zeta^4 - 3 \\
\end{smallmatrix*} \right)
$$
has the desired property~\eqref{eq:polidentity}, since $\tr^K_\QQ(T + \ol{T})
= L^T J_3 L$ and since $\tr^K_\QQ(\zeta^4T + \ol{\zeta^4T}) = L^T M_3^T J_3 L$.
We can now find a $W$ satisfying \eqref{eq:defW}, and we choose
$$ W \= \left(\begin{smallmatrix*}[r]
0 & \ \ 3 - \zeta  & 0 \\
3^{-1/4} & 0 & 0 \\
0 & 1 & \ \ 3-i \\
\end{smallmatrix*} \right).
$$
Substituting all of these choices, for Shimura's form of the period matrices we finally obtain
\bas
(Z_1^{(S)}(z_1,z_2)\,|\, Z_2^{(S)}(z_1,z_2))  = \qquad\qquad\qquad\qquad\qquad\qquad \\
\left(\begin{smallmatrix*}[r]
3^{-1/4}\,z_2\, &\ z_1 + 1\, & (3-\zeta)\,z_1 + 3-i\, &
\ 3^{-1/4}\;\zeta^4\, z_2\, &\, \zeta^4(z_1 + 1)\, & ((3-\zeta)\,z_1 + 3-i)\zeta^4\\
0 \,& z_1 + 1\,&\  (3-i)(z_1+1) + 3-\zeta^{-1}&
0 \,& (z_1 + 1)\zeta^8 \, &\,\  ((3-i)(z_1+1) + (3-\zeta^{-1})) \zeta^8\\
3^{-1/4}\,& z_2 \,& (3+i)\,z_2 \, &
3^{-1/4}\;  \zeta^8\,&  \zeta^8\,z_2 \,& (3+i) \zeta^8\,z_2 \\
\end{smallmatrix*} \right)
\eas
\par
Finally, the last indeterminacy in the choice of the period matrices is
that we can choose different eigenforms within the two eigenspaces. This
means we can further have a base change matrix $C = (c_{ij})$ for holomorphic
one-forms and are looking for $z=(z_1,z_2)$ such that
\be \label{eq:solveforspec}
 \left(\begin{smallmatrix}
c_{11} & 0  & 0 \\
0 & c_{22} & c_{23} \\
0 & c_{32} & c_{23} \\
\end{smallmatrix} \right) (Z_1^{(S)}(z)\,|\ Z_2^{(S)}(z)) \= (Z_1^{(3)}\,|\, Z_2^{(3)}) \,.
\ee
This system has a unique solution, given by
\par
\bas \scriptstyle \begin{matrix*}[l]
\scriptstyle c_{11} \= \zeta^2 + 3\zeta + 1, &\scriptstyle
\ c_{22} \= -3\zeta,&\scriptstyle
c_{23} \= 3^{3/4}\,(\zeta^3-\zeta^2+1), \\
& \scriptstyle\ c_{32} \=  4\zeta^3 +2\zeta^2 -5\zeta -4, &\scriptstyle
c_{33} \= 3^{3/4}\,(\zeta^3+\zeta^2-1)
\end{matrix*}
\eas
and $(z_1,z_2)$ as stated in the proposition.
With these values of $c_{ij}$, the left hand side of~\eqref{eq:solveforspec}
is the family given in the statement of the proposition.
\end{proof}

%\bibliographystyle{halpha}
%\bibliography{my}

\end{document}